\newtheorem*{thm}{Theorem}
 \newtheorem*{fl}{Fundamental lemma}
\newtheorem{lemme}{Lemma}
\newtheorem{prop}{Proposition}
\newtheorem{cor}{Corollary}
\newtheorem{ex}{Example}
\newtheorem{cex}{Counter-example}
\newtheorem*{q}{Question}
\begin{document}
\title{Alternate modules are subsymplectic}

\author{Cl\'ement Gu\'erin\thanks{
              IRMA Institut de Recherche  Math\'ematiques avanc\'ee,  University of Strasbourg, 7 rue Ren\'e Descartes, F-67084 Strasbourg , France,
            e-mail : guerin@math.unistra.fr} }
\maketitle

\begin{abstract}
In this paper, an alternate module $(A,\phi)$ is a finite abelian group $A$ with a $\mathbb{Z}$-bilinear application $\phi:A\times A\rightarrow \mathbb{Q}/\mathbb{Z}$ which is alternate (i.e. zero on the diagonal). We shall prove that any alternate module is subsymplectic, i.e. if $(A,\phi)$ has a Lagrangian of cardinal $n$ then there exists an abelian group $B$ of order $n$ such that $(A,\phi)$ is a submodule of the standard symplectic module  $B\times B^*$.

\end{abstract}

\textbf{Keywords : } alternate modules, symplectic modules, Lagrangians, finite abelian groups.

\textbf{Mathematics Subject Classification : }20K01.
\tableofcontents

\section{Definitions and statement of the result}\label{3.sec1}

\bigskip

Before stating the result, we give some standard definitions along with some properties regarding the alternate modules. The concept is defined by Wall in \cite{Wal} and by Tignol, Amitsur in \cite{A-T2}. Among the results, we will establish here, some might be directly read in those references and also in \cite{T-W}.

\bigskip

Let   $A$ be an abelian group. If we are given an application   $\phi:A\times A\rightarrow \mathbb{Q}/\mathbb{Z}$, then we say that $(A,\phi)$ is a  \textbf{bilinear module} if $\phi$ is bilinear when $A$ is seen as a $\mathbb{Z}$-module. By definition, the \textbf{underlying group} of the bilinear module   $(A,\phi)$ is  $A$ and  the associated \textbf{bilinear form} to $(A,\phi)$ is  $\phi$. 

\bigskip

In the sequel, all bilinear modules are assumed to be finite.

\bigskip

Let $(A,\phi)$ a  bilinear module, we say that $(A,\phi)$ is an \textbf{alternate module} if $\phi(a,a)=0$ for all  $a\in A$. If $(A,\phi)$  is an alternate module, then for all $a,b\in A$ :

\begin{align*}
0&=\phi(a+b,a+b)&\text{ since $(A,\phi)$ is alternate}\\
&=\phi(a,a)+\phi(a,b)+\phi(b,a)+\phi(b,b)&\text{ by  "bilinearity"}\\
&=\phi(a,b)+\phi(b,a)&\text{ since $(A,\phi)$ is alternate}
\end{align*}

Whence for all  $a,b\in A$,   $\phi(a,b)=-\phi(b,a)$. We will sum up this by saying that if $(A,\phi)$ is an alternate module then, in particular $\phi$ is anti-symmetric.

\bigskip

Let $(A,\phi)$ be an alternate module and $a,b\in A$,  we say that  $a$ is  \textbf{orthogonal} to $b$ if $\phi(a,b)=0$. Since  $A$ is alternate, any element of  $A$ is orthogonal to itself and the relation of orthogonality is symmetric by anti-symmetry of the bilinear form  $\phi$.

\bigskip

Let $(A,\phi)$ be an alternate module and $S$ a subset of  $A$. We define $S^{\perp}$ the \textbf{orthogonal} of $S$ as the subset of elements in  $A$ which are orthogonal to  $S$. One can directly check that $S^{\perp}$  is a subgroup of $A$.

\bigskip

Let $(A,\phi)$ be an alternate module and $B$ a subgroup of  $A$ then the  bilinear module induced by $A$ on $B$ : $(B,\phi_{B\times B})$ is also an alternate module  called the  \textbf{induced submodule} on  $B$ by  $\phi$.

\bigskip

Let $(A_1,\phi_1)$ and $(A_2,\phi_2)$  be two  alternate modules, we define the  \textbf{orthogonal sum} of those modules by :

$$(A_1,\phi_1)\overset{\perp}{\oplus}(A_2,\phi_2):=(A,\phi) $$ 

With  $A:=A_1\oplus A_2$ and $\phi((a_1,a_2),(b_1,b_2)):=\phi_1(a_1,b_1)+\phi_2(a_2,b_2)$. It is a straightforward verification that  $(A,\phi)$ is an alternate module, verifying  $A_1\leq A_2^{\perp}$, $A_2\leq A_1^{\perp}$ and  the induced  submodule by  $\phi$ on  $A_i$ is $(A_i,\phi_i)$. 

\bigskip

Conversly, if  $(A,\phi)$ is an alternate module and $B,C$ two subgroups of  $A$ such that $A=B\oplus C$ with  $B\leq C^{\perp}$, then  $(A,\phi)$ can be written as the orthogonal sum of the two induced modules on  $B$ and $C$ by $\phi$. 

\bigskip

\bigskip

Let  $A$  be a finite  module, we define $A^*$, the \textbf{dual} of  $A$ as the group $A^*:=Mor(A,\mathbb{Q}/\mathbb{Z})$. A non-trivial result (direct consequence of the classification of finite abelian groups) states that $A$ is isomorphic to  $A^*$ as a group. Remark that the isomorphism between $A$ and $A^*$ is not canonical.

\bigskip

Let $(A,\phi)$ be an alternate module, we define the \textbf{dual application} associated to $(A,\phi)$ as  :

\begin{displaymath}
\phi^*:
\left|
  \begin{array}{rcl}
    A& \longrightarrow &A^*\\
 a& \longmapsto &b\mapsto \phi(a,b)\\
  \end{array}
\right.
\end{displaymath}

\bigskip

The  \textbf{kernel} $K_{\phi}$ of $(A,\phi)$ is, by definition, the kernel  $Ker(\phi^*)$ of the dual application $\phi^*$. In other words,  $K_{\phi}=A^{\perp}$ i.e. the set of elements in  $A$ which are orthogonal to any element of  $A$.   

\bigskip

Let $(A,\phi)$ be an alternate module, we say that  $(A,\phi)$ is a  \textbf{symplectic module} if its kernel $K_{\phi}$ is trivial (i.e. the application $\phi$ is non-degenerate).

\bigskip

To any   alternate module $(A,\phi)$, we can associate a  symplectic module  $(B,\phi_B)$ setting $B:=A/K_{\phi}$ and :

$$\phi_B(a_1 \text{ mod } K_{\phi},a_2 \text{ mod } K_{\phi}):=\phi(a_1,a_2)$$

The fact that $(B,\phi_B)$ is well defined and is a   symplectic module is clear. This symplectic module will always be denoted $(A/K_{\phi},\overline{\phi})$ and called the \textbf{symplectic module} associated to  $(A,\phi)$. 

\bigskip

An important example of symplectic module is the following :

\begin{ex}\label{3.sympmod}

Let $B$ be an abelian group. We define $A:=B\times B^*$ and :

\begin{displaymath}
\phi:
\left|
  \begin{array}{rcl}
    A\times A& \longrightarrow &\mathbb{Q}/\mathbb{Z}\\
 ((a_1,\psi_1),(a_2,\psi_2))& \longmapsto & \psi_2(a_1)-\psi_1(a_2)\\
  \end{array}
\right.
\end{displaymath}

Then  $(A,\phi)$ is a  symplectic module. In the sequel, such  symplectic module will always be denoted $B\times B^*$ (the underlying bilinear form being the form $\phi$ as above).

\end{ex}

\begin{proof}
Clearly $(A,\phi)$ is an alternate module. Let $(a,\psi)$ be in $K_{\phi}$ then for all  $b\in A$   :

$$\phi((a,\psi),(b,0))=0\Rightarrow \psi(b)=0 $$

This implies that  $\psi$ is the trivial morphism, i.e. $\psi=0$. Furthermore for all $\psi'\in Mor(A,\mathbb{Q}/\mathbb{Z})$ :

$$\phi((a,0),(0,\psi'))=0\Rightarrow \psi'(a)=0$$

Since  $A$ is isomorphic to $Mor(A,\mathbb{Q}/\mathbb{Z})$ (although the isomorphism is not canonical) this implies that $a=0$. Therefore $K_{\phi}$ is trivial and $(A,\phi)$ is a  symplectic module.\end{proof}

Let $(A,\phi)$  be an alternate module and $S$ a subset of $A$. We say that   $S$ is \textbf{isotropic} if $S\subseteq S^{\perp}$. Furthermore, if   $S=S^{\perp}$ then  $S$ is a subgroup of $A$ and is called a   \textbf{Lagrangian} of $(A,\phi)$. In general (see proposition \ref{3.carlag}) the cardinal of a Lagrangian of $(A,\phi)$ only depends on $|A|$ and $|K_{\phi}|$. We shall denote $n_{A,\phi}:=\sqrt{|A||K_{\phi}|}$ the cardinal of any Lagrangian of $(A,\phi)$. 

\bigskip

In the example \ref{3.sympmod}, the subgroups  $B$ and  $B^*$ of $A$ are both Lagrangians of the module  $(A,\phi)$.

\bigskip

Let $(A,\phi)$   and  $(A',\phi')$ be alternate modules, we say that  $(A,\phi)$ and $(A',\phi')$ are  \textbf{isometric} if there exists a group isomorphism   $g:A\rightarrow A'$ verifying $\phi=g^*\phi'$.  The isometric relation is clearly an equivalence relation.

\bigskip
 
Now, we come to the most important definition of this paper. Let $(A,\phi)$ be an alternate module. We say that $(A,\phi)$ is \textbf{subsymplectic} if   there exists an abelian group $B$ of order $n_{A,\phi}$ such that $(A,\phi)$ is included in the standard symplectic module $B\times B^*$. The result we are going to prove here is   :

\begin{thm}\label{result}
Any alternate module is subsymplectic.

\end{thm}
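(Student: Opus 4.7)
The plan is to decompose $(A,\phi)$ orthogonally into elementary pieces and exhibit an explicit subsymplectic embedding for each. First I would reduce to the $p$-primary case: since $\phi(a,b)=0$ whenever $a$ and $b$ have coprime orders, $(A,\phi) = \overset{\perp}{\bigoplus}_p (A_p, \phi_p)$ splits over primary components. Being subsymplectic is preserved by orthogonal sums, via the natural identification $(B_1 \oplus B_2) \times (B_1 \oplus B_2)^* \cong (B_1 \times B_1^*) \overset{\perp}{\oplus} (B_2 \times B_2^*)$ together with the multiplicativity $n_{A_1 \overset{\perp}{\oplus} A_2} = n_{A_1}\cdot n_{A_2}$, so one may assume $A$ is a $p$-group.

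Next I would prove by induction on $|A|$ that every alternate $p$-module is isometric to an orthogonal direct sum of elementary pieces of two kinds: cyclic kernel pieces $K_n := (\mathbb{Z}/p^n, 0)$, and generalized hyperbolic planes $H_{m,n,k} := (\mathbb{Z}/p^m \oplus \mathbb{Z}/p^n, \phi_k)$ with $m \geq n \geq k \geq 1$ and $\phi_k(e_1, e_2) = 1/p^k$. The inductive step picks $a_0 \in A$ of maximal order $p^m$: if $a_0 \in K_\phi$, then $\langle a_0 \rangle$ is a direct summand of the underlying abelian group (a standard fact for maximal-order elements) and is orthogonal to everything, so it splits off as a $K_m$-summand. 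Otherwise, one chooses $b_0 \in A$ so that $\phi(a_0, b_0)$ generates the cyclic image of $\phi(a_0,\cdot)$, of order $p^k$ for some $k \leq m$; after adjusting $b_0$ by elements of $\langle a_0 \rangle$ and of $\langle a_0 \rangle^\perp$, one arranges that $\langle a_0, b_0 \rangle$ is both a direct summand of the underlying abelian group and an orthogonal direct summand of $(A,\phi)$, yielding an $H_{m,n,k}$-summand with $p^n = \mathrm{ord}(b_0)$.

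Each elementary piece admits an explicit embedding into some $B \times B^*$. For $K_n$, take $B = \mathbb{Z}/p^n$ and $a \mapsto (a, 0)$; this is trivially an isometry since $\phi \equiv 0$. For $H_{m,n,k}$, the formula $|K_{\phi_k}| = p^{m+n-2k}$ yields $n_{H_{m,n,k}} = p^{m+n-k}$, and I would take $B = \mathbb{Z}/p^{m+n-k}$ together with the map $(a,b) \mapsto (p^{n-k}a, \psi_b)$, where $\psi_b \in B^*$ is $x \mapsto bx/p^n$. Injectivity holds because $p^{n-k}a = 0$ in $\mathbb{Z}/p^{m+n-k}$ forces $a = 0$, and $b/p^n = 0$ in $\mathbb{Q}/\mathbb{Z}$ forces $b = 0$ in $\mathbb{Z}/p^n$; the isometry comes from the direct computation $\psi_{b_2}(p^{n-k}a_1) - \psi_{b_1}(p^{n-k}a_2) = (a_1 b_2 - a_2 b_1)/p^k$. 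Assembling these across the orthogonal decomposition of $A$ gives the embedding of $A$ into $B \times B^*$ with $B$ the direct sum of the piecewise $B$'s, of total order $\prod n_{\mathrm{piece}} = n_{A,\phi}$.

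The main obstacle is the non-kernel inductive step of the structural decomposition: after fixing $a_0$ of maximal order in the non-kernel case, the naive choice of $b_0$ may fail to make $\langle a_0, b_0 \rangle$ simultaneously a direct summand of the underlying abelian group and an orthogonal direct summand of $(A,\phi)$. Careful modification of $b_0$ is required, using the interplay between the order of $b_0$, the order of $\phi(a_0, b_0)$, and the cyclic decomposition of $A$. Additionally, the case $p = 2$ deserves special attention, as the alternate hypothesis is strictly stronger than antisymmetry in characteristic $2$, and this may complicate the structural arguments.
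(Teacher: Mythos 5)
Your reduction to $p$-groups and your observation that subsymplecticity is preserved under orthogonal sums are both correct (these are corollary \ref{ptors} and proposition \ref{decomporth} of the paper), and your explicit embeddings of the pieces $K_n$ and $H_{m,n,k}$ into $B\times B^*$ check out, including the count $n_{H_{m,n,k}}=p^{m+n-k}$. The fatal problem is the structure theorem on which everything rests: it is \emph{false} that every alternate $p$-module is an orthogonal sum of cyclic kernel pieces and rank-two pieces $H_{m,n,k}$. Counter-example \ref{3.pasdebasesymp} of the paper, $A=\mathbb{Z}/2\times\mathbb{Z}/4\times\mathbb{Z}/8$ with $\phi(e_1,e_2)=\phi(e_1,e_3)=1/2$ and $\phi(e_2,e_3)=-1/4$, has cyclic kernel $K_\phi\cong\mathbb{Z}/4$ and is orthogonally indecomposable of rank $3$. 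One can see directly that it cannot have your form: a decomposition $K_{n_1}\overset{\perp}{\oplus}H_{m,n,k}$ would have kernel $\mathbb{Z}/2^{n_1}\oplus\mathbb{Z}/2^{m-k}\oplus\mathbb{Z}/2^{n-k}$, and forcing this to be cyclic of order $4$ with $n_1\geq 1$ gives $m=n=k$, i.e.\ two of the three cyclic factors of $A$ of equal order, which is not the case. The step of your induction that breaks is exactly the one you flagged as the ``main obstacle'': when $\phi(a_0,b_0)$ has order $p^k$ strictly smaller than the orders of $a_0$ and $b_0$, the degenerate plane $\langle a_0,b_0\rangle$ cannot in general be adjusted so as to split off orthogonally --- only \emph{symplectic} submodules are guaranteed to be orthogonal direct summands (lemma \ref{decompsymp}).

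The paper avoids any such classification. Its fundamental lemma shows that a non-symplectic alternate $p$-module $(A,\phi)$ embeds in a strictly larger alternate module $(\hat{A},\hat{\phi})$ with $n_{\hat{A},\hat{\phi}}=n_{A,\phi}$ --- either by adjoining a $\mathbb{Z}/p$ factor pairing nontrivially with an element of $K_\phi$ outside $p\cdot A$, or by replacing a cyclic generator $e$ by a $p$-th root $\hat{e}$; each such extension strictly decreases the order of the kernel, so finitely many steps reach a symplectic module, which is $B\times B^*$ by corollary \ref{3.classificationsymp}. To salvage your approach you would need to enlarge the list of elementary pieces to include indecomposables of rank $3$ (and possibly higher --- the paper leaves the bound on the rank of indecomposables as an open question), which is precisely the classification problem the paper deems intractable.
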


Whereas the classification of symplectic modules is easily delt with, it seems hopeless to do the same thing for alternate modules in general. The statement that alternate modules are subsymplectic appears as the closest possible result to a classification of alternate modules.

\bigskip

In another paper, we will use this result to classify conjugacy classes of centralizers of irreducible subgroups in $PSL(n,\mathbb{C})$. 

\bigskip

In the second section, we shall characterize and study the Lagrangians in alternate modules.  In the third section we will prove the theorem. In the last section, we make some remarks about this proof and alternate modules in general.

\bigskip

\section{Lagrangians in alternate modules}

\bigskip

Some results in this section have been proven in other papers, we shall only give the references. We begin with an elementary propoosition :

\begin{prop}\label{3.sympB}

Let   $(A,\phi)$ be a  symplectic module and $B$ a subgroup of  $A$ then :

\begin{enumerate}

\item $|B||B^{\perp}|=|A|$ and $(B^{\perp})^{\perp}=B$.

\item If the induced submodule on $B$ by $(A,\phi)$ is a  symplectic module then the induced submodule on $B^{\perp}$ by  $(A,\phi)$ is also a symplectic module, Furthermore 

$$A=B\overset{\perp}{\oplus} B^{\perp}$$

\end{enumerate}

\end{prop}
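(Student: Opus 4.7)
The plan is to exploit the fact that the dual map $\phi^*:A\to A^*$ is an isomorphism (by symplecticity together with $|A|=|A^*|$), and to combine this with the divisibility of $\mathbb{Q}/\mathbb{Z}$, which makes the restriction $A^*\to B^*$ surjective (any morphism $B\to\mathbb{Q}/\mathbb{Z}$ extends to $A$ because $\mathbb{Q}/\mathbb{Z}$ is an injective abelian group). Once both of these ingredients are in place, everything follows from counting orders.

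For part 1, I would consider the composition
\[
\psi : A \xrightarrow{\phi^*} A^* \xrightarrow{\mathrm{res}} B^*,
\]
where the second arrow restricts a morphism on $A$ to $B$. Both arrows are surjective, so $\psi$ is surjective. By definition the kernel of $\psi$ is $\{a\in A : \phi(a,b)=0 \text{ for all }b\in B\}=B^{\perp}$. Hence $A/B^{\perp}\cong B^{*}$, and since $|B^*|=|B|$, we obtain $|A|=|B||B^{\perp}|$. Applying this identity with $B$ replaced by $B^{\perp}$ yields $|(B^{\perp})^{\perp}|=|A|/|B^{\perp}|=|B|$. Since the inclusion $B\subseteq(B^{\perp})^{\perp}$ is immediate from the definition of the orthogonal (and the antisymmetry of $\phi$), equality of orders forces $(B^{\perp})^{\perp}=B$.

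For part 2, the assumption that the induced submodule on $B$ is symplectic means that its kernel $B\cap B^{\perp}$ is trivial. Then by part 1,
\[
|B+B^{\perp}|=\frac{|B||B^{\perp}|}{|B\cap B^{\perp}|}=|B||B^{\perp}|=|A|,
\]
so $B+B^{\perp}=A$ and the sum $B\oplus B^{\perp}$ is direct. By the very definition of $B^{\perp}$ we have $B\leq(B^{\perp})^{\perp}$, so the decomposition is orthogonal, and the converse statement in the preliminaries gives $A=B\overset{\perp}{\oplus}B^{\perp}$. Finally, the kernel of the induced module on $B^{\perp}$ is $B^{\perp}\cap(B^{\perp})^{\perp}=B^{\perp}\cap B=0$ by part 1, so this induced submodule is symplectic.

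The only genuinely non-formal step is the surjectivity of the restriction $A^*\to B^*$; apart from that, the proof is purely a matter of order counting and of unwinding definitions. I therefore expect no serious obstacle beyond recalling (or invoking the structure theorem via $\mathbb{Q}/\mathbb{Z}$-injectivity) that morphisms from a subgroup of a finite abelian group into $\mathbb{Q}/\mathbb{Z}$ extend to the whole group.
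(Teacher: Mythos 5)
Your proof is correct and complete. The paper itself does not prove this proposition but merely cites Lemma~1 of Wall and Proposition~2.2 of Amitsur--Tignol; the argument you give (surjectivity of $A^*\to B^*$ via injectivity of $\mathbb{Q}/\mathbb{Z}$, identification of $\ker\psi$ with $B^{\perp}$, then order counting) is precisely the standard one found in those references, so you have simply supplied the omitted proof.
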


\begin{proof}
See proposition 2.2 of \cite{A-T2} or  lemma 1 in \cite{Wal}.\end{proof}

Whereas, alternate modules are very hard to classify, the following   property implies that any symplectic module is isometric to one constructed as in the example \ref{3.sympmod}. In particular, the set of symplectic modules is quite rigid.

\begin{cor}\label{3.classificationsymp}

Let  $(A,\phi)$ be a symplectic module. There exists $B\leq A$ such that $(A,\phi)$ is isomorphic to $B\times B^*$.

\end{cor}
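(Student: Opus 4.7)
The plan is to induct on $|A|$, producing at each step a \emph{hyperbolic plane} inside $A$ on which $\phi$ restricts to a non-degenerate pairing, then splitting it off orthogonally via Proposition \ref{3.sympB}. The base case $A=0$ is trivial with $B=0$.

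For the inductive step, I would choose $a\in A$ of maximal order $n$ and look for $b\in A$ such that $\phi(a,b)$ has order exactly $n$ in $\mathbb{Q}/\mathbb{Z}$. Since $na=0$, the image of the homomorphism $\phi(a,\cdot):A\to\mathbb{Q}/\mathbb{Z}$ lies in the $n$-torsion subgroup $\frac{1}{n}\mathbb{Z}/\mathbb{Z}$; if that image were contained in $\frac{1}{m}\mathbb{Z}/\mathbb{Z}$ for some proper divisor $m$ of $n$, then $\frac{n}{m}a$ would be a nonzero element of $A^{\perp}=K_{\phi}$, contradicting that $(A,\phi)$ is symplectic. Hence a suitable $b$ exists; the maximality of $n$ then forces $|b|=n$ as well.

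Next I would check that $H:=\langle a,b\rangle\cong\langle a\rangle\oplus\langle b\rangle\cong(\mathbb{Z}/n)^{2}$ (if $ka=lb$, pairing with $a$ gives $l\phi(a,b)=0$, so $n\mid l$; symmetrically $n\mid k$) and that the induced form on $H$ is symplectic (an element $ja+kb$ in $K_{\phi_{H}}$ must satisfy $n\mid j$ and $n\mid k$, using that $\phi(a,b)$ has order $n$). Identifying $\langle b\rangle$ with $\langle a\rangle^{*}$ via $b\mapsto\phi(a,\cdot)$ — which is an isomorphism because $\phi(a,b)$ generates $\langle a\rangle^{*}\cong\mathbb{Z}/n$ — yields an isometry $H\cong\langle a\rangle\times\langle a\rangle^{*}$. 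Proposition \ref{3.sympB} then gives $A=H\overset{\perp}{\oplus}H^{\perp}$ with $H^{\perp}$ symplectic and strictly smaller.

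Applying the induction hypothesis to $H^{\perp}$ yields $C\leq H^{\perp}$ with $H^{\perp}\cong C\times C^{*}$, and setting $B:=\langle a\rangle\oplus C\leq A$ I would conclude by the natural isometry
\[
(B_{1}\times B_{1}^{*})\overset{\perp}{\oplus}(B_{2}\times B_{2}^{*})\;\cong\;(B_{1}\oplus B_{2})\times(B_{1}\oplus B_{2})^{*},
\]
which follows from $(B_{1}\oplus B_{2})^{*}\cong B_{1}^{*}\oplus B_{2}^{*}$ together with the explicit form of the standard symplectic pairing of Example \ref{3.sympmod}. The main obstacle is the first technical step: producing $b$ with $\phi(a,b)$ of exact order $n$, which genuinely requires non-degeneracy and the choice of $a$ of maximal order; everything afterwards is a clean orthogonal decomposition and bookkeeping.
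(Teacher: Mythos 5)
Your argument is essentially correct, and it is worth noting that the paper itself gives no proof of this corollary at all: it simply cites Lemma 7 of Wall and Theorem 4.1 of Amitsur--Tignol, whose proofs run along exactly the lines you propose (split off a hyperbolic plane generated by an element of maximal order and a dual partner, then induct using the orthogonal decomposition of Proposition \ref{3.sympB}). So what your write-up buys is a self-contained proof inside the paper's own framework, and all the later steps (the direct-sum and non-degeneracy checks for $H=\langle a,b\rangle$, the isometry $H\cong\langle a\rangle\times\langle a\rangle^*$, the splitting $A=H\overset{\perp}{\oplus}H^{\perp}$, and the identification $(B_1\times B_1^*)\overset{\perp}{\oplus}(B_2\times B_2^*)\cong(B_1\oplus B_2)\times(B_1\oplus B_2)^*$) are correct as stated. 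One local slip should be repaired: if the image of $\phi(a,\cdot)$ is contained in $\frac{1}{m}\mathbb{Z}/\mathbb{Z}$ for a proper divisor $m$ of $n$, the element that lands in $K_{\phi}$ is $ma$ (since $\phi(ma,x)=m\phi(a,x)=0$ for all $x$), not $\frac{n}{m}a$; for instance with $n=8$, $m=4$ one has $\frac{n}{m}a=2a$ and $\phi(2a,x)$ need only lie in $\frac{1}{2}\mathbb{Z}/\mathbb{Z}$, so $\frac{n}{m}a$ need not be in the kernel. With $ma$ in place of $\frac{n}{m}a$ the contradiction with non-degeneracy goes through verbatim, and indeed this part uses only non-degeneracy; the maximality of the order of $a$ is what you then need to conclude $|b|=n$ and hence that the plane is $(\mathbb{Z}/n)^2$.
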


\begin{proof}
See lemma 7 in \cite{Wal} or theorem 4.1 in \cite{A-T2}. \end{proof}
 
This construction can be seen as the analogous of a symplectic base for $(A,\phi)$ (i.e. a "base" of $B$ as a finite abelian group and its "dual base" in $B^*$). A direct consequence of this corollary is that any two symplectic modules $(A,\phi)$ and $(A',\phi')$ are isometric if and only if $A$ and $A'$ are isomorphic as groups.

\bigskip

It is, somehow, surprising that such  symplectic base exists for those finite modules (in a similar fashion than for bilinear forms on $K$-vector spaces). Whereas $K$-vector spaces endowed with an alternate form (not necessarily symplectic) are always the sum of the kernel of the form and a non-degenerate complementary, this is not the case for alternate modules in general. Indeed, one can consider the following counter-example :

\begin{cex}\label{3.pasdebasesymp}

Let $A:=\mathbb{Z}/2\times\mathbb{Z}/4\times \mathbb{Z}/8$. We denote $e_1:=(1,0,0)$, $e_2:=(0,1,0)$ and $e_3:=(0,0,1)$. Then, on  the base  $e_1,e_2,e_3$, we define the alternate bilinear form $\phi$ on  $A$ with its associated matrix :

$$M_{\phi}:=\begin{pmatrix}0&1/2&1/2\\1/2&0&-1/4\\1/2&1/4&0\end{pmatrix}  $$

Then the kernel $K_{\phi}$ of $(A,\phi)$ is not a direct factor of  $A$, in particular, $A$ is not the sum of a non-degenerate module and its kernel.

\end{cex}

\begin{proof}

With a straightforward computation, one can check that $K_{\phi}=\langle e_1+2e_2+2e_3\rangle$ is isomorphic to $\mathbb{Z}/4$. We remark that $2(e_1+2e_2+2e_3)=4e_3\in K_{\phi}$. Furthermore if $(x,y,z)\in A$ then $4(x,y,z)=(0,0,4z)$ so any element $a\in A$ of order $8$ will verify that $4a=4e_3\in K_{\phi}$. In particular, $K_{\phi}$ is not a direct factor of $A$.  \end{proof}

This little example shows that the kernel of  $(A,\phi)$ is not necessarily a direct factor of $A$. Furthermore, one can check that $A$ cannot be written as the orthogonal sum of strictly smaller submodules. In some sense, it is irreducible, this suggests that if there is a classification of alternate modules (in some sense, we would like to have one in the sequel), it should be complicated. Another corollary of the proposition \ref{3.sympB} :

\begin{cor}\label{3.Lsm}

Let $(A,\phi)$ be a symplectic module, then any Lagrangian $L$ of $A$ is of cardinal  $\sqrt{|A|}$.

\end{cor}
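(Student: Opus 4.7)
The plan is to apply Proposition \ref{3.sympB} directly. That proposition tells us that in a symplectic module $(A,\phi)$, every subgroup $B$ satisfies the orthogonality identity
\[
|B|\cdot|B^{\perp}| = |A|.
\]
This is precisely the tool we need.

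So the first (and only essential) step is to take $B = L$ in this identity. Since $L$ is a Lagrangian, by definition $L = L^{\perp}$, so $|L| = |L^{\perp}|$. Substituting into the identity gives $|L|^2 = |A|$, and therefore $|L| = \sqrt{|A|}$.

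There is no real obstacle here; the corollary is a one-line consequence of the first item of Proposition \ref{3.sympB} combined with the definition of a Lagrangian. The only thing worth noting is that $\sqrt{|A|}$ is automatically an integer because $|A|$ is a perfect square in the symplectic case, a fact that is itself guaranteed by the existence of any such $L$ (or equivalently by Corollary \ref{3.classificationsymp}). Nothing further needs to be said.
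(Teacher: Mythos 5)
Your proof is correct and is exactly the paper's argument: take $B=L$ in the first point of Proposition \ref{3.sympB} and use $L=L^{\perp}$ to get $|L|^2=|A|$. Nothing further is needed.
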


\begin{proof}

By definition,  $L=L^{\perp}$, applying the first point of proposition \ref{3.sympB}, we get $|L|^2=|A|$. \end{proof}

In their paper, \cite{A-T2}, J.-P. Tignol and  S.A. Amitsur  are interested in Lagrangian of symplectic modules, they apply this to division algebra (cf. \cite{A-T1}). We need to understand, Lagrangians of alternate modules. We recall some results about Lagrangians in this case :

\begin{prop}\label{3.Lagrsymp}
Let    $(A,\phi)$  be an alternate module. The Lagrangians of  $A$ are exactly the maximal elements for the inclusion within the set of  isotropic subsets of $A$. In other words a subset  $L$ in $A$ is a Lagrangian of  $A$ if and only if for any  $K'$ isotropic subset of $A$ containing $L$,  $K'=L$.

\end{prop}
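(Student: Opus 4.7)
The proof plan is to verify the two inclusions corresponding to the two directions of the ``if and only if''.

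First I would show that every Lagrangian is maximal isotropic. Let $L = L^{\perp}$ and suppose $K'$ is an isotropic subset with $L \subseteq K'$. The operator $\perp$ is inclusion-reversing, so $(K')^{\perp} \subseteq L^{\perp} = L$. Combined with the isotropy hypothesis $K' \subseteq (K')^{\perp}$, this gives the chain $K' \subseteq (K')^{\perp} \subseteq L \subseteq K'$, forcing $K' = L$. So $L$ is maximal among isotropic subsets.

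For the converse, let $L$ be a maximal isotropic subset. The first step is to observe that $L$ is automatically a subgroup: by bilinearity and the hypothesis $\phi(l_1,l_2)=0$ for $l_1,l_2\in L$, the subgroup $\langle L\rangle$ generated by $L$ is still isotropic, and it contains $L$, so by maximality $L=\langle L\rangle$. The key computation is then the following: for any $x\in L^{\perp}$, the subgroup $\langle L\cup\{x\}\rangle$ is still isotropic. Indeed, an arbitrary pair of elements has the form $y_1+k_1x,\ y_2+k_2x$ with $y_i\in L$, and expanding by bilinearity gives
$$\phi(y_1,y_2)+k_2\phi(y_1,x)+k_1\phi(x,y_2)+k_1k_2\phi(x,x).$$
The first term vanishes because $L$ is isotropic, the last because $\phi$ is alternate, and the middle two because $x\in L^{\perp}$ combined with anti-symmetry.

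With that computation in hand, assume for contradiction that $L\subsetneq L^{\perp}$ and pick $x\in L^{\perp}\setminus L$. Then $\langle L\cup\{x\}\rangle$ is an isotropic subgroup strictly larger than $L$, contradicting maximality. Hence $L^{\perp}\subseteq L$, and combined with the isotropy inclusion $L\subseteq L^{\perp}$ we obtain $L=L^{\perp}$, i.e. $L$ is a Lagrangian. No step here looks difficult; the only thing to be careful about is that ``isotropic'' is defined for arbitrary subsets rather than subgroups, which is handled by the preliminary remark that a maximal isotropic subset must coincide with the subgroup it generates.
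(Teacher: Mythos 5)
Your proof is correct and follows essentially the same route as the paper: the forward direction via the inclusion-reversing property of $\perp$, and the converse by adjoining an element $x\in L^{\perp}\setminus L$ to produce a strictly larger isotropic subgroup. The only difference is that you spell out details the paper treats as clear (that a maximal isotropic subset is a subgroup, and the bilinearity expansion showing $\langle L,x\rangle$ is isotropic), which is fine.
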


\begin{proof}

Let  $L$ be an isotropic subset of $A$. Assume that $L$ is a Lagrangian, then if  $K'$ is an isotropic subset of $A$ containing $L$, then   $L^{\perp}\supseteq K'$ since $K'$ is orthogonal to $K'$ whence to $L$. Since  $L$ is a Lagrangian, $L=L^{\perp}$ whence $K'\subseteq  L$, since $K'\supseteq L$ by assumption $L=K'$ and $L$ is thus maximal among the isotropic subsets of  $A$.

\bigskip

Assume that $L$ is not a Lagrangian. Since  $L$ is isotropic,   $L\leq L^{\perp}$ and there exists  $a\in L^{\perp}$ which does not belong to $L$. We define $K':=\langle L,a\rangle$, clearly  $L\leq K'$ and $K'\neq L$, furthermore $L$ being isotropic is orthogonal to  $a$ by assumption, whence $K'$ is an isotropic subspace of $A$. Whence $L$ is not maximal among the isotropic subsets of $A$. \end{proof}

Let $(A,\phi)$ be an alternate module then the set of isotropic subspaces of $A$ is not empty (it contains the trivial group) and finite (since $A$ is finite). Therefore, there exists a maximal element for the inclusion. Whence any alternate module admits a Lagrangian. Furthermore, if $K$ is any isotropic subset, there will always be a maximal isotropic subset containing it, therefore any isotropic subset is contained in a Lagrangian. In particular, for any $a\in A$, there exists a Lagrangian containing $a$ (applying what we have just done to $K=\langle a\rangle$).

\bigskip

The next proposition is a generalization of corollary  \ref{3.Lsm} :

\begin{prop}\label{3.carlag}
Let  $(A,\phi)$ be an alternate module. For any Lagrangian  $L$  of $A$, the cardinal of  $L$ is $\sqrt{|A||K_{\phi}|}$.

\end{prop}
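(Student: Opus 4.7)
The plan is to reduce to the symplectic case by passing to the quotient $A/K_\phi$ and applying Corollary \ref{3.Lsm}.

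First I would show that every Lagrangian $L$ of $(A,\phi)$ contains the kernel $K_\phi$. Indeed, since $K_\phi$ is orthogonal to every element of $A$, the subgroup $\langle L, K_\phi\rangle$ is still isotropic: any element of $K_\phi$ is orthogonal to $L$, and $K_\phi$ is orthogonal to itself. By Proposition \ref{3.Lagrsymp}, $L$ is maximal among isotropic subsets, so $\langle L, K_\phi\rangle = L$, hence $K_\phi \subseteq L$.

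Next I would consider the image $\bar L := L/K_\phi$ in the associated symplectic module $(A/K_\phi,\overline{\phi})$, and show that $\bar L$ is a Lagrangian there. Isotropy is clear: for $a, b \in L$, $\overline{\phi}(\bar a, \bar b) = \phi(a,b) = 0$. For maximality, suppose $\bar M$ is an isotropic subgroup of $A/K_\phi$ containing $\bar L$; pulling back along the quotient map $\pi\colon A \to A/K_\phi$, the preimage $M := \pi^{-1}(\bar M)$ contains $L$ (since $K_\phi \subseteq L$) and is isotropic in $A$ because $\phi(a,b) = \overline{\phi}(\bar a,\bar b) = 0$ for $a,b \in M$. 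Maximality of $L$ then forces $M = L$, so $\bar M = \bar L$. By Proposition \ref{3.Lagrsymp} applied to the symplectic module $A/K_\phi$, this shows $\bar L$ is a Lagrangian.

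Applying Corollary \ref{3.Lsm} to $(A/K_\phi,\overline{\phi})$ gives $|\bar L| = \sqrt{|A/K_\phi|} = \sqrt{|A|/|K_\phi|}$. Since $|L| = |K_\phi|\cdot|\bar L|$, we conclude
\[
|L| = |K_\phi|\sqrt{|A|/|K_\phi|} = \sqrt{|A||K_\phi|},
\]
which is the desired formula. The main conceptual step is the inclusion $K_\phi \subseteq L$, which relies essentially on the maximality characterization of Lagrangians from Proposition \ref{3.Lagrsymp}; once that is in place, the rest is a routine transfer between $A$ and its symplectic quotient.
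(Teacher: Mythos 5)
Your proof is correct and follows essentially the same route as the paper: both pass to the associated symplectic quotient $A/K_{\phi}$, use the maximality characterization of Proposition \ref{3.Lagrsymp} to see that Lagrangians of $A$ contain $K_{\phi}$ and correspond to Lagrangians of $A/K_{\phi}$, and then apply Corollary \ref{3.Lsm} together with $|L|=|K_{\phi}||\overline{L}|$. Your write-up merely makes explicit the steps the paper labels as ``clear'' (the inclusion $K_{\phi}\subseteq L$ and the fact that $\pi(L)$ is again Lagrangian), which is fine; note only that to invoke Proposition \ref{3.Lagrsymp} verbatim you should test maximality of $\overline{L}$ against isotropic subsets, not just subgroups, but your pullback argument applies word for word in that generality.
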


\begin{proof}

We define $(A/K_{\phi},\overline{\phi})$ to be the associated symplectic module to $(A,\phi)$ and $\pi$ be the projection of $A$ onto $A/K_{\phi}$. Clearly the application $\pi$ leads to a bijective correspondance between isotropic subgroups of $A$ containing $K_{\phi}$ and isotropic subgroups of $A/K_{\phi}$. 

\bigskip

Since maximal elements among isotropic subsets of $A$ are isotropic subgroups of $A$ containing $K_{\phi}$, it follows that $\pi$ induces a bijective correspondance between Lagrangians in $A$ and Lagrangians in $A/K_{\phi}$ (by sending $L$ to $\pi(L)$). In particular, any Lagrangian $L$ of $A$ is of the form $\pi^{-1}(\overline{L})$ where $\overline{L}$ is a Lagrangian in $A/K_{\phi}$.

\begin{align*}
|L|&=|\pi^{-1}(\overline{L})|\\
&=|K_{\phi}||\overline{L}|\\
&=|K_{\phi}|\sqrt{\frac{|A|}{|K_{\phi}|}}\text{ by corollary  \ref{3.Lsm} }\\
&=\sqrt{|A||K_{\phi}|}
\end{align*}

\end{proof}

We see that the cardinal of Lagrangians is constant in alternate modules. However, the isomorphism class of Lagrangians may vary (see also \cite{A-T2}).
 
\begin{ex}\label{3.lagrdiff}

Let $(A,\phi)$ be the alternate module defined in counter-example \ref{3.pasdebasesymp}. Then $A=\mathbb{Z}/2\times \mathbb{Z}/4\times \mathbb{Z}/8$, and if we denote $e_1:=(1,0,0)$, $e_2=(0,1,0)$ and $e_3=(0,0,1)$ in $A$ then the subgroups $L_1:=\langle e_3,e_1+2e_2\rangle$ and $L_2:=\langle e_1,2e_2,2e_3\rangle$ are Lagrangians in $A$ and  :

\begin{align*}
L_1\text{ is isomorphic to }& \mathbb{Z}/2\times\mathbb{Z}/8\\
L_2\text{ is isomorphic to }& \mathbb{Z}/2\times\mathbb{Z}/2\times\mathbb{Z}/4
\end{align*}

In particular, $L_1$ is not isomorphic to  $L_2$.

\end{ex}

\begin{proof}
From the counter-example \ref{3.pasdebasesymp}, $|K_{\phi}|=4$, hence the cardinal of a Lagrangian in  $(A,\phi)$ is $\sqrt{2\cdot4\cdot8\cdot4}=16$ using proposition  \ref{3.carlag}.

\bigskip

We directly check that both $L_1$ and $L_2$ are isotropic.

\bigskip

Furthermore $e_1+2e_2$ is of order $2$ and not in $\langle e_3\rangle$, therefore $L_1$ is isomorphic to $\langle e_1+2e_2\rangle\times\langle e_3\rangle$. Whence $L_1$ is isomorphic to $\mathbb{Z}/2\times\mathbb{Z}/8$. In particular, $L_1$ is isotropic and has the cardinal of a Lagrangian, by maximality of Lagrangians (proposition \ref{3.Lagrsymp}) it is Lagrangian.

\bigskip

Finally,   $L_2$ is clearly isomorphic to $ \mathbb{Z}/2\times\mathbb{Z}/2\times\mathbb{Z}/4$. Likewise, the  $L_2$ is isotropic of cardinal $16$,  by maximality of Lagrangians (proposition \ref{3.Lagrsymp}) it is Lagrangian.\end{proof}
 
In general, it is much more convenient to work with abelian $p$-groups than finite abelian groups. We remark that :

\begin{prop}\label{decomporth}

Let $(A,\phi)$ be an alternate module such that $(A,\phi)$ is isometric to the orthogonal sum of  $(A_1,\phi_1)$ and $(A_2,\phi_2)$ :

$$(A,\phi)=(A_1,\phi_1)\overset{\perp}{\oplus}(A_2,\phi_2) $$

If both $(A_1,\phi_1)$ and $(A_2,\phi_2)$ are subsymplectic then  $(A,\phi)$ is subsymplectic.

\end{prop}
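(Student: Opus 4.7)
The plan is to glue the two given embeddings together using the natural isomorphism $B_1\oplus B_2$ has with its dual, and exploit the fact that orthogonal sums of embeddings give embeddings of orthogonal sums.

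First I would record the following elementary facts about orthogonal sums. If $(A,\phi)=(A_1,\phi_1)\overset{\perp}{\oplus}(A_2,\phi_2)$, then an element $(a_1,a_2)$ lies in $K_\phi$ if and only if each $a_i$ lies in $K_{\phi_i}$, so $K_\phi=K_{\phi_1}\oplus K_{\phi_2}$. Consequently
\begin{equation*}
n_{A,\phi}=\sqrt{|A||K_\phi|}=\sqrt{|A_1||K_{\phi_1}|}\cdot\sqrt{|A_2||K_{\phi_2}|}=n_{A_1,\phi_1}\cdot n_{A_2,\phi_2}.
\end{equation*}
By hypothesis, for each $i$ there is an abelian group $B_i$ with $|B_i|=n_{A_i,\phi_i}$ and an isometric embedding $f_i:(A_i,\phi_i)\hookrightarrow B_i\times B_i^*$. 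Setting $B:=B_1\oplus B_2$, this gives $|B|=n_{A,\phi}$, which is the candidate group for proving $(A,\phi)$ subsymplectic.

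Next I would verify that the standard symplectic module $B\times B^*$ is isometric to the orthogonal sum $(B_1\times B_1^*)\overset{\perp}{\oplus}(B_2\times B_2^*)$. Using the canonical isomorphism $(B_1\oplus B_2)^*\simeq B_1^*\oplus B_2^*$ and expanding the definition of $\phi$ from Example \ref{3.sympmod}, one directly sees
\begin{equation*}
\phi\bigl((a_1,a_2,\psi_1,\psi_2),(b_1,b_2,\xi_1,\xi_2)\bigr)=\bigl[\xi_1(a_1)-\psi_1(b_1)\bigr]+\bigl[\xi_2(a_2)-\psi_2(b_2)\bigr],
\end{equation*}
which is exactly the orthogonal sum of the two pairings on $B_i\times B_i^*$. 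This identification is the only slightly delicate step.

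Finally I would assemble the embedding. Define $f:A_1\oplus A_2\to(B_1\times B_1^*)\oplus(B_2\times B_2^*)$ by $f(a_1,a_2):=(f_1(a_1),f_2(a_2))$. This is injective because $f_1,f_2$ are, and because each $f_i$ preserves $\phi_i$ and because $A_1,A_2$ sit orthogonally in $(A,\phi)$ while $B_1\times B_1^*$ and $B_2\times B_2^*$ sit orthogonally in $(B_1\times B_1^*)\overset{\perp}{\oplus}(B_2\times B_2^*)$, the map $f$ preserves the bilinear forms. Composing with the isometry from the previous paragraph yields an isometric embedding $(A,\phi)\hookrightarrow B\times B^*$, which is precisely the subsymplectic property for $(A,\phi)$. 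The main obstacle, if any, is the bookkeeping in showing $B\times B^*\simeq(B_1\times B_1^*)\overset{\perp}{\oplus}(B_2\times B_2^*)$; everything else is formal.
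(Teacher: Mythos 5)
Your proposal is correct and follows essentially the same route as the paper: compute $K_\phi=K_{\phi_1}\oplus K_{\phi_2}$ to get $n_{A,\phi}=n_{A_1,\phi_1}n_{A_2,\phi_2}$, then embed the orthogonal sum into $(B_1\times B_1^*)\overset{\perp}{\oplus}(B_2\times B_2^*)$ and identify this with $B\times B^*$ for $B=B_1\oplus B_2$. You merely make explicit the identification $(B_1\oplus B_2)^*\simeq B_1^*\oplus B_2^*$ that the paper leaves as a one-line inclusion, which is a harmless (indeed welcome) extra detail.
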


\begin{proof}

We begin by showing that $K_{\phi}=K_{\phi_1}\oplus K_{\phi_2}$. Clearly, $K_{\phi}$ contains $K_{\phi_1}$ and $K_{\phi_2}$ and both are in direct sum. Finalely, if $k=(l_1,l_2)\in K_{\phi}\leq A_1\oplus A_2$ then for all  $a_1\in A_1$, we have :

$$0=\phi(k,(a_1,0))=\phi_1(l_1,a_1)+\phi_2(l_2,0)=\phi_1(l_1,a_1) $$

This is true for any $a_1\in A_1$ so  $l_1\in K_{\phi_1}$. Likewise, $l_2\in K_{\phi_2}$, whence  $K_{\phi}=K_{\phi_1}\oplus K_{\phi_2}$.

\bigskip

It follows that $|K_{\phi}|=|K_{\phi_1}||K_{\phi_2}|$, since $|A|=|A_1||A_2|$, we have :

\begin{equation}
n_{A,\phi}=n_{A_1,\phi_1}n_{A_2,\phi_2}
\label{cardl}
\end{equation}

By hypothesis, for $i=1,2$, there exists $B_i$, an abelian gorup of order $n_{A_i,\phi_i}$ such that $(A_i,\phi_i)\leq B_i\times B_i^*$. Whence :

\begin{align*}
(A,\phi)&=(A_1,\phi_1)\overset{\perp}{\oplus}(A_2,\phi_2)\\
&\leq (B_1\times B_1^*)\overset{\perp}{\oplus}(B_2\times B_2^*)\\
&\leq B\times B^*\text{ where $B:=B_1\times B_2$}
\end{align*}

We have included $(A,\phi)$ (as an alternate module) in $B\times B^*$, since $|B|=|B_1||B_2|=n_{A_1,\phi_1}n_{A_2,\phi_2}=n_{A,\phi}$ by equation \ref{cardl}, it follows, by definition, that $(A,\phi)$ is subsymplectic.

\end{proof}
 
As a result :

\begin{cor}\label{ptors}

Let $(A,\phi)$ be an alternate module then, we denote, for $p$ any prime dividing $|A|$, $S_p$ the unique $p$-Sylow of $A$. If for all $p$ dividing $|A|$, the induced submodule on $S_p$ by $\phi$ is subsymplectic then $(A,\phi)$ is subsymplectic.

\end{cor}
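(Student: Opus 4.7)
The plan is to reduce to Proposition~\ref{decomporth} by showing that the Sylow decomposition of $A$ is actually an \emph{orthogonal} decomposition, and then iterating over primes.

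First I would recall the standard fact that a finite abelian group decomposes as the internal direct sum of its Sylow subgroups: $A=\bigoplus_{p\mid |A|}S_p$. The key additional step is to verify that this decomposition is orthogonal with respect to $\phi$, that is, $S_p\leq S_q^{\perp}$ whenever $p\neq q$. For this I would take $a\in S_p$ of order $p^k$ and $b\in S_q$ of order $q^\ell$, and observe that by $\mathbb{Z}$-bilinearity $p^k\phi(a,b)=\phi(p^ka,b)=0$ and $q^\ell\phi(a,b)=\phi(a,q^\ell b)=0$ in $\mathbb{Q}/\mathbb{Z}$. Hence $\phi(a,b)$ has order dividing $\gcd(p^k,q^\ell)=1$, so $\phi(a,b)=0$. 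This is the main computational content; nothing else in the proof is really substantial.

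With this orthogonality in hand, the converse statement following the definition of the orthogonal sum applies: $(A,\phi)$ decomposes as the orthogonal sum of the induced submodules on the Sylow subgroups. Concretely, ordering the primes dividing $|A|$ as $p_1,\dots,p_r$, I would write
\begin{equation*}
(A,\phi)=(S_{p_1},\phi|_{S_{p_1}})\overset{\perp}{\oplus}\cdots\overset{\perp}{\oplus}(S_{p_r},\phi|_{S_{p_r}}),
\end{equation*}
where the orthogonal sum is grouped from left to right; at each step the second factor $S_{p_i}$ is orthogonal to the accumulated factor (a subgroup of $\bigoplus_{j<i}S_{p_j}$) by the pairwise orthogonality just established.

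Finally, I would conclude by induction on $r$ using Proposition~\ref{decomporth}: assuming every induced submodule on $S_p$ is subsymplectic, the base case $r=1$ is trivial, and the inductive step applies Proposition~\ref{decomporth} to the splitting $(A_1,\phi_1)\overset{\perp}{\oplus}(S_{p_r},\phi|_{S_{p_r}})$ where $(A_1,\phi_1)$ is the orthogonal sum of the first $r-1$ Sylow pieces (subsymplectic by induction). The main (and essentially only) obstacle is verifying the orthogonality of distinct Sylows, which as noted above is a short arithmetic observation in $\mathbb{Q}/\mathbb{Z}$; everything else is a direct application of earlier material.
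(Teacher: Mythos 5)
Your proposal is correct and follows essentially the same route as the paper: decompose $A$ into its Sylow subgroups, check pairwise orthogonality via the order argument in $\mathbb{Q}/\mathbb{Z}$, and then apply Proposition~\ref{decomporth}. Your explicit induction on the number of primes to handle more than two orthogonal summands is a minor bookkeeping step the paper leaves implicit.
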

 
\begin{proof}

It is a well known result that if $p_1,\dots,p_r$ are the $r$ distinct primes dividing $|A|$ then, as a group, $A$ is isomorphic to the product of its $p$-Sylows

$$A=S_{p_1}\oplus\cdots \oplus S_{p_r} $$

Furthermore, if $1\leq i\neq j\leq r$ and $a_i\in S_{p_i}$, $a_j\in S_{p_j}$ then $\phi(a_i,a_j)$ is of order dividing some power of $p_i$ and some power of  $p_j$, it follows that  $\phi(a_i,a_j)$ is of order $1$, i.e. is trivial. In particular, for $i\neq j$, the induced modules $S_{p_i}$ and $S_{p_j}$ are orthogonal to each other. It follows that :

$$(A,\phi)=S_{p_1}\overset{\perp}{\oplus}\cdots \overset{\perp}{\oplus} S_{p_r} $$

The corollary is a direct consequence of this decomposition and proposition \ref{decomporth}.\end{proof}

With those classical properties, we will show the theorem.

\bigskip

\section{Any alternate module is subsymplectic}

\bigskip

Basically, the idea is to make an induction on the cardinal of the kernel. The following lemma is the major step of the proof  :

\begin{fl}

Let $p$ be a prime number and $(A,\phi)$  an alternate module  which is not symplectic with $A$ a $p$-group. Then there exists an alternate module $(\hat{A},\hat{\phi})$ such that $(A,\phi)$ is a submodule of $(\hat{A},\hat{\phi})$, $|A|<|\hat{A}|$ and $n_{A,\phi}=n_{\hat{A},\hat{\phi}}$.

\end{fl}

In the sequel, we will say that $(\hat{A},\hat{\phi})$ is an \textbf{extension of modules} of $(A,\phi)$ via the inclusion $\iota_A$ if $\iota_A:A\rightarrow \hat{A}$ is an inclusion of alternate modules, i.e. $\iota_{A*}\hat{\phi}=\phi$ and $\iota_A$ is an inclusion of modules. The extension will be \textbf{with constant Lagrangians} if $n_{\hat{A},\hat{\phi}}=n_{A,\phi}$.

 \bigskip

 The first subsection gathers some preliminary results. The second subection is the proof of the fundamental lemma. For subsections 3.1 and 3.2, $p$ is a fixed prime number and any alternate module $(A,\phi)$ is assumed to have its underlying group $A$ to be a $p$-group. In the third subsection, we prove the theorem.

\bigskip

\subsection{Preliminaries}

\bigskip

The next lemma is interesting in itself, it generalizes the second point of proposition \ref{3.sympB}.

\begin{lemme}\label{decompsymp}
Let $(A,\phi)$ be an alternate module. If there exists a submodule $B$ of $(A,\phi)$ such that $B$ is a symplectic module then :

$$(A,\phi)\text{ is isometric to } B\overset{\perp}{\oplus} B^{\perp} $$

\end{lemme}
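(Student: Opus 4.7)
The plan is to produce the internal orthogonal decomposition $A = B \oplus B^{\perp}$ and then invoke the converse statement about orthogonal sums recalled just after the definition of $(A_1,\phi_1)\overset{\perp}{\oplus}(A_2,\phi_2)$. Two points are to be checked: that $B \cap B^{\perp} = \{0\}$, and that $A = B + B^{\perp}$.

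The first point is immediate from the hypothesis. Indeed, if $b \in B \cap B^{\perp}$, then $b$ lies in $B$ and is orthogonal to every element of $B$, so it belongs to the kernel of the induced form on $B$; since $B$ is assumed to be a symplectic module, this kernel is trivial. For the second point, the main tool is that the dual application attached to the induced form on $B$, regarded as a map $B \to B^*$, is injective by non-degeneracy, and therefore bijective because $|B| = |B^*|$ (finite abelian groups are non-canonically isomorphic to their duals, as recalled in Section 1). Given any $a \in A$, the map $x \mapsto \phi(a,x)$ restricts to an element of $B^*$, which by surjectivity equals $\phi^*(b)$ for some $b \in B$; then $\phi(a-b,x) = 0$ for all $x \in B$, i.e. $a - b \in B^{\perp}$, and $a = b + (a-b)$.

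Combining the two points, $A = B \oplus B^{\perp}$ as an internal direct sum, with $B \leq (B^{\perp})^{\perp}$ holding automatically (if $b \in B$ and $c \in B^{\perp}$, then $\phi(c,b) = 0$ by definition of $B^{\perp}$, hence $\phi(b,c) = 0$ by antisymmetry). The converse statement recalled in Section 1 then yields the desired isometry $(A,\phi) \simeq B \overset{\perp}{\oplus} B^{\perp}$. I do not anticipate a serious obstacle: the argument is the standard orthogonal-decomposition trick, and the only step that genuinely uses finite abelian group theory is the cardinality argument $|B| = |B^*|$ used to upgrade the injective $\phi^*_B$ into a bijection.
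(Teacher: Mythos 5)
Your argument is correct, but it takes a genuinely different route from the paper for the substantive half of the proof. Both proofs dispose of $B\cap B^{\perp}=\{0\}$ the same way (an element of the intersection lies in the kernel of the induced form on $B$, which is trivial). For the equality $A=B+B^{\perp}$, the paper does not construct the decomposition at all: it observes that $B\overset{\perp}{\oplus}B^{\perp}$ is a submodule of $(A,\phi)$ and then proves $|B||B^{\perp}|=|A|$ by a counting argument, passing to the associated symplectic module $A/K_{\phi}$, applying the first point of proposition \ref{3.sympB} to $\overline{B}:=\pi(B)$, and using $\pi^{-1}(\overline{B}^{\perp})=B^{\perp}$ to get $|B^{\perp}|=|K_{\phi}|\,|A|/(|K_{\phi}||B|)$. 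You instead split every $a\in A$ explicitly: the dual map $\phi^*_B:B\to B^*$ of the induced form is injective by non-degeneracy, hence bijective since $|B|=|B^*|$, so the restriction of $\phi(a,\cdot)$ to $B$ equals $\phi^*_B(b)$ for some $b\in B$ and $a-b\in B^{\perp}$; the converse statement about orthogonal sums from Section 1 then gives the isometry. Your version is more self-contained (it bypasses proposition \ref{3.sympB}, which the paper only cites from the literature, and never needs the quotient $A/K_{\phi}$), at the cost of invoking the non-canonical isomorphism $B\simeq B^*$ directly; the paper's version is shorter given the toolkit it has already set up, and its counting scheme is the one reused later in the paper (e.g.\ in proposition \ref{3.carlag} and the fundamental lemma). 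The one point worth keeping an eye on in your write-up is only notational: with the paper's conventions $\phi^*_B(b)$ is $x\mapsto\phi(b,x)$, and antisymmetry makes your identification of the restricted functional with some $\phi^*_B(b)$ harmless.
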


\begin{proof}

Let $(A,\phi)$ be an alternate module and $B$ be a submodule of $A$ which is symplectic. Then $B^{\perp}$ is also a submodule of $(A,\phi)$. Since $B$ is symplectic, $B^{\perp}\cap B$ is trivial. It follows that $B\oplus B^{\perp}$ is a subgroup of $A$. By definition, $B$ is orthogonal to $B^{\perp}$ it follows that :

$$B\overset{\perp}{\oplus} B^{\perp} \text{ is a submodule of } (A,\phi)$$

Let $\pi$ be the canonical projection of $A$ on $A/K_{\phi}$. Since $B$ is symplectic, $B\cap K_{\phi}$ is trivial and $B$ is isomorphic to $\overline{B}:=\pi(B)$. Using the first point of proposition \ref{3.sympB} :

\begin{align*}
\overline{B}^{\perp}&=\frac{|A/K_{\phi}|}{|\overline{B}|}\\
&=\frac{|A|}{|K_{\phi}||B|}
\end{align*}

Clearly, $\pi^{-1}(\overline{B}^{\perp})=B^{\perp}$, it follows that $|B^{\perp}|=|K_{\phi}||\overline{B}^{\perp}|$. Finally, we have that :

\begin{align*}
|B\overset{\perp}{\oplus} B^{\perp}|&=|B||B^{\perp}|\\
&=|B||K_{\phi}||\overline{B}^{\perp}|\\
&=|B||K_{\phi}|\frac{|A|}{|K_{\phi}||B|}\\
&=|A|
\end{align*}

Since $B\overset{\perp}{\oplus} B^{\perp}$ is a submodule of $(A,\phi)$ and both modules have the same cardinal, we have that :

$$(A,\phi)\text{ is isometric to } B\overset{\perp}{\oplus} B^{\perp} $$

\end{proof}

The next lemma gives a sufficient condition for an  extension of modules to be with constant Lagrangians :

\begin{lemme}\label{extlagconst}

Let $p$ be a prime number, $(\hat{A},\hat{\phi})$ be an alternate module   and $A$ be a subgroup of $\hat{A}$ of index $p$. We denote $(A,\phi)$ the induced submodule on $A$ by $\hat{\phi}$. If $K_{\phi}$ is not included in $K_{\hat{\phi}}$ then $n_{A,\phi}=n_{\hat{A},\hat{\phi}}$.

\end{lemme}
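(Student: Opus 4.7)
The plan is to show directly that $|K_\phi| = p \, |K_{\hat\phi}|$, which combined with $|\hat A| = p\,|A|$ gives $|A|\,|K_\phi| = |\hat A|\,|K_{\hat\phi}|$, hence $n_{A,\phi}^{2} = n_{\hat A,\hat\phi}^{2}$, and we are done.

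To get this index formula I would fix $\hat a \in \hat A \setminus A$, so that $\hat A = A + \mathbb{Z}\hat a$ and $p\hat a \in A$, and introduce the homomorphism
\[
f: K_\phi \longrightarrow \mathbb{Q}/\mathbb{Z}, \qquad f(x) := \hat\phi(x,\hat a).
\]
Its kernel is easy to identify: if $x \in K_\phi$ then $\hat\phi(x,y)=\phi(x,y)=0$ for $y \in A$, so $\hat\phi(x,\cdot)$ vanishes on all of $\hat A$ iff it vanishes on $\hat a$; thus $\ker f = K_\phi \cap K_{\hat\phi}$. The image lies in the $p$-torsion of $\mathbb{Q}/\mathbb{Z}$ because $A$ is a $p$-group, and it is further killed by $p$ since $pf(x)=\hat\phi(x,p\hat a)=\phi(x,p\hat a)=0$ for $x\in K_\phi$; so $\mathrm{Im}(f) \subseteq \tfrac{1}{p}\mathbb{Z}/\mathbb{Z}$, hence has order $1$ or $p$. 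The hypothesis $K_\phi \not\subseteq K_{\hat\phi}$ rules out order $1$, so
\[
|K_\phi| = p\,|K_\phi \cap K_{\hat\phi}|.
\]

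The remaining step, which I expect to be the real (though small) subtlety, is to identify $K_\phi \cap K_{\hat\phi}$ with $K_{\hat\phi}$ itself, i.e.\ to prove $K_{\hat\phi} \subseteq A$. I would argue by contradiction: if some $\hat k \in K_{\hat\phi}$ lies outside $A$, then $\hat k$ plays the role of $\hat a$, so $\hat A = A + \mathbb{Z}\hat k$. For any $x \in K_\phi$ and any $\hat y = y + m\hat k \in \hat A$ we would then get
\[
\hat\phi(x,\hat y) = \phi(x,y) + m\,\hat\phi(x,\hat k) = 0 - m\,\hat\phi(\hat k,x) = 0,
\]
so $K_\phi \subseteq K_{\hat\phi}$, contradicting the hypothesis. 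Therefore $K_{\hat\phi} \subseteq A$, and since any element of $K_{\hat\phi}$ which is in $A$ is automatically orthogonal to $A$ and hence in $K_\phi$, we obtain $K_{\hat\phi} = K_\phi \cap K_{\hat\phi}$.

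Combining the two equalities yields $|K_\phi| = p\,|K_{\hat\phi}|$, whence $n_{A,\phi} = n_{\hat A,\hat\phi}$ as claimed. The whole argument is essentially a one-dimensional rank computation on the quotient $\hat A/A \cong \mathbb{Z}/p$; the only place where one must be careful is making sure that $K_{\hat\phi}$ really sits inside $A$, which is exactly where the hypothesis $K_\phi \not\subseteq K_{\hat\phi}$ gets used a second time.
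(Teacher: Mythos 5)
Your proof is correct, and it shares the paper's central device --- the homomorphism $f$ on $K_{\phi}$ obtained by pairing against a coset representative of $\hat A/A$, whose image is killed by $p$ and is nontrivial precisely because $K_{\phi}\not\subseteq K_{\hat\phi}$ --- but it closes the argument differently. The paper first shows $K_{\hat\phi}\subseteq A$ (hence $K_{\hat\phi}\leq K_{\phi}$) by pairing an arbitrary $x=\lambda\hat e+a\in K_{\hat\phi}$ against a $k_{0}\in K_{\phi}$ with $f(k_{0})$ of order $p$, and then, rather than computing the index $[K_{\phi}:K_{\hat\phi}]$ directly, writes $n_{\hat A,\hat\phi}=\sqrt{p/[K_{\phi}:K_{\hat\phi}]}\,n_{A,\phi}$ and invokes the fact that every Lagrangian of $A$ extends to a Lagrangian of $\hat A$ (so $n_{A,\phi}$ divides $n_{\hat A,\hat\phi}$) to force the square root to equal $1$. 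You instead get the exact count $|K_{\phi}|=p\,|K_{\phi}\cap K_{\hat\phi}|$ from the first isomorphism theorem applied to $f$, and prove $K_{\hat\phi}\subseteq A$ by the contradiction argument (a $\hat k\in K_{\hat\phi}\setminus A$ would generate $\hat A$ over $A$ and force $K_{\phi}\subseteq K_{\hat\phi}$). Your version is the more self-contained of the two: it needs nothing beyond the isomorphism theorem, whereas the paper's final step leans on Propositions \ref{3.Lagrsymp} and \ref{3.carlag} about existence and cardinality of Lagrangians. Both identifications of $K_{\hat\phi}$ inside $A$ are valid; yours uses the hypothesis $K_{\phi}\not\subseteq K_{\hat\phi}$ a second time where the paper reuses the element $k_{0}$. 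One cosmetic remark: the clause ``the image lies in the $p$-torsion because $A$ is a $p$-group'' is redundant (and the $p$-group hypothesis is not even in the lemma's statement, only in the ambient convention of the section); your subsequent computation $p\,f(x)=\phi(x,p\hat a)=0$ already gives everything you need.
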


\begin{proof}

Let $\hat{e}$ be an element of $\hat{A}$ which is not in $A$. Since $[\hat{A}:A]$ is of cardinal $p$, any element in $\hat{a}\in\hat{A}$ can uniquely be written as :

$$\hat{a}=\lambda\hat{e}+a\text{ where }0\leq \lambda\leq p-1\text{ and } a\in A $$

Define :

$$f:\left| \begin{array}{rcl}
K_{\phi}&\longrightarrow & \mathbb{Q}/\mathbb{Z}\\
k&\longmapsto&\hat{\phi}(\hat{e},k)\end{array}\right.$$

Then for all $k\in K_{\phi}$ :

\begin{align*}
p\cdot f(k)&=p\cdot \hat{\phi}(\hat{e},k)\\
&=\hat{\phi}(p\hat{e},k)\\
&=\phi(p\hat{e},k)\text{ since $p\hat{e}$ and $k$ are in $A$}\\
&=0\text{ since $k\in K_{\phi}$ and $p\hat{e}\in A$}
\end{align*}

As a result $Im(f)$ is a finite subgroup of $\mathbb{Q}/\mathbb{Z}$ of exponent dividing $p$. Whence $Im(f)$ is either trivial or the unique cyclic subgroup of order $p$ in $\mathbb{Q}/\mathbb{Z}$.

\bigskip

If $Im(f)$ is trivial, then $K_{\phi}$ is orthogonal to $\hat{e}$, since $K_{\phi}$ is orthogonal to $A$ and $\hat{A}$ is generated by $\hat{e}$ and $A$, it follows that $K_{\phi}$ is orthogonal to $\hat{A}$, whence $K_{\phi}\leq K_{\hat{\phi}}$. By assumption, this is a contradiction. It follows that $Im(f)$ is cyclic of cardinal $p$.

\bigskip

Let $x=\lambda \hat{e}+a\in K_{\hat{\phi}}$ where $0\leq \lambda \leq p-1$ and $a\in A$. Let $k_0\in K_{\phi}$ such that $f(k_0)$ is of order $p$. Since $x\in  K_{\hat{\phi}}$, we have $\hat{\phi}(x,k_0)=0$. On the other hand :

\begin{align*}
\hat{\phi}(x,k_0)&=\lambda\hat{\phi}(\hat{e},k_0)+\hat{\phi}(a,k_0)\\
 &=\lambda f(k_0)+\phi(a,k_0)\\
 &=\lambda f(k_0)+0\text{ since $k_0\in K_{\phi}$}\\
\end{align*}

It follows that $\lambda f(k_0)=0$. Since $f(k_0)$ is of order $p$, we have that $p$ divides $\lambda$ and since $0\leq \lambda \leq p-1$, we end up with $\lambda=0$ which implies that $x\in A$. As a result, we have shown that $K_{\hat{\phi}}\leq K_{\phi}$. 

\bigskip

Finally :

\begin{align*}
n_{\hat{A},\hat{\phi}}&=\sqrt{|\hat{A}||K_{\hat{\phi}}|}\\
&=\frac{\sqrt{p|A||K_{\phi}|}}{\sqrt{[K_{\phi}:K_{\hat{\phi}}]}}\\
&=\sqrt{\frac{p}{[K_{\phi}:K_{\hat{\phi}}]}}n_{A,\phi}
\end{align*}

Let $L$ be a Lagrangian of $A$, then $L$ is isotropic in $\hat{A}$ and by proposition \ref{3.Lagrsymp}, there exists a Lagrangian $\hat{L}$ of $\hat{A}$ containing $L$. Using Lagrange's theorem, we have that $|L|$ divides $|\hat{L}|$ and using proposition \ref{3.carlag}, $n_{A,\phi}$ divides $n_{\hat{A},\hat{\phi}}$. It follows that $\sqrt{\frac{p}{[K_{\phi}:K_{\hat{\phi}}]}}$ is a positive integer which cannot but be equal to one (because $p$ is a prime number). In particular, we have that $n_{\hat{A},\hat{\phi}}=n_{A,\phi}$. \end{proof}

The last lemma in those preleminaries allows us to construct extension of modules in a fairly simple way.

\begin{lemme}\label{exten}

Let $p$ be a prime number, $(A,\phi)$  an alternate module where $A$ is a $p$ group. Assume that there exist $r+1$ non-trivial elements   $e,e_1,\dots,e_r$ in $A$ such that :

$$A=\langle e\rangle\times \langle e_1\rangle\times\cdots\times\langle e_r\rangle $$

Assume furthermore that for any $1\leq i\leq r$ the order of $\phi(e,e_i)$ is strictly lesser than the order of $e_i$ then there exists an alternate module $(\hat{A},\hat{\phi})$ such that $\hat{A}=\langle \hat{e}\rangle\times \langle e_1\rangle\times\cdots\times\langle e_r\rangle$ where $\hat{e}$ is of order $p$ times the order of $e$ and the inclusion :

$$\iota_A:\left| \begin{array}{rcl}
A&\longrightarrow &\hat{A}\\
e&\longmapsto&p\hat{e}\\
e_i&\longmapsto& e_i\text{ for $1\leq i\leq r$}\end{array}\right.$$

is an inclusion of submodules.

\end{lemme}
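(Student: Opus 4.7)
Write $\mathrm{ord}(e)=p^{n_0}$ and $\mathrm{ord}(e_i)=p^{n_i}$. My plan is to take $\hat A$ to be the abstract direct product $\langle\hat e\rangle\times\langle e_1\rangle\times\cdots\times\langle e_r\rangle$, where $\hat e$ is cyclic of order $p^{n_0+1}$, and to define $\hat\phi$ by prescribing its values on pairs of generators and then extending $\mathbb{Z}$-bilinearly. In order that $\iota_A$ pull $\hat\phi$ back to $\phi$, I must set $\hat\phi(e_i,e_j):=\phi(e_i,e_j)$ and have $\hat\phi(p\hat e,e_i)=\phi(e,e_i)$; the latter rewrites as $p\cdot\hat\phi(\hat e,e_i)=\phi(e,e_i)$. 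I also put $\hat\phi(\hat e,\hat e):=0$ and use anti-symmetry for the transposed pairs.

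The substantive step will be, for each $i$, to exhibit $x_i\in\mathbb{Q}/\mathbb{Z}$ with $px_i=\phi(e,e_i)$ whose order divides $p^{n_i}$, so that the prescription $\hat\phi(\hat e,e_i):=x_i$ is a legitimate entry on a pair of cyclic generators of respective orders $p^{n_0+1}$ and $p^{n_i}$. If $\mathrm{ord}(\phi(e,e_i))=p^{m_i}$, then writing $\phi(e,e_i)=a_i/p^{m_i}$ and setting $x_i:=a_i/p^{m_i+1}$ yields an element of order $p^{m_i+1}$. The hypothesis $\mathrm{ord}(\phi(e,e_i))<\mathrm{ord}(e_i)$ gives $m_i\le n_i-1$, so $p^{m_i+1}$ divides $p^{n_i}$, which is exactly the required bound. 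This is the only point where the hypothesis is used, and I expect it to be the main (albeit mild) obstacle.

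Once the $x_i$ are chosen, the bilinear extension is well-defined because every entry of the Gram matrix on the generating family $(\hat e,e_1,\dots,e_r)$ has order dividing the g.c.d.\ of the orders of the corresponding generators. Alternance on $\hat A$ follows from diagonal vanishing and anti-symmetry on generators: expanding $\hat\phi(a,a)$ for a general $a=\lambda\hat e+\sum_i\mu_i e_i$, the diagonal contributions vanish and the off-diagonal ones cancel in pairs. Finally $\iota_A$ is an injective group homomorphism because $p\hat e$ has order $p^{n_0}=\mathrm{ord}(e)$, and it is an isometry onto its image by the construction of the values of $\hat\phi$ on generator pairs; so $(A,\phi)$ is realized as a submodule of $(\hat A,\hat\phi)$.
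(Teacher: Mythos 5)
Your construction is the same as the paper's: extend $e$ to $\hat e$ of order $p\cdot\mathrm{ord}(e)$, keep the other generators, divide each $\phi(e,e_i)$ by $p$ inside $\mathbb{Q}/\mathbb{Z}$ to get the new Gram entries $\hat\phi(\hat e,e_i)$, and check the order constraints that make this prescription bilinear (your bound $p^{m_i+1}\mid p^{n_i}$ is exactly where the hypothesis enters, and the companion bound $p^{m_i+1}\mid p^{n_0+1}$, which you assert via the g.c.d.\ criterion, follows since $\mathrm{ord}(\phi(e,e_i))$ divides $\mathrm{ord}(e)$ by bilinearity). The argument is correct and essentially identical to the paper's proof.
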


\begin{proof}

Let $\hat{A}$ be $\langle \hat{e}\rangle\times \langle e_1\rangle\times\cdots\times\langle e_r\rangle$ where $\hat{e}$ is of order $p$ times the order of $e$. Clearly, the mapping :

$$\iota_A:\left| \begin{array}{rcl}
A&\longrightarrow &\hat{A}\\
e&\longmapsto&p\hat{e}\\
e_i&\longmapsto& e_i\text{ for $1\leq i\leq r$}\end{array}\right.$$

defines an injective morphism of groups. In order to define the bilinear form $\hat{\phi}$,  it suffices to define it on a generating set of $\hat{A}\times \hat{A}$. Define :

$$\hat{\phi}(e_i,e_j):=\phi(e_i,e_j)\text{ for all $1\leq i,j\leq r$} $$

We also define $\hat{\phi}(\hat{e},\hat{e}):=0$. Finally, remark that the group $\mathbb{Q}/\mathbb{Z}$ is $p$-divisible (i.e. any element admits a $p$-th root). Then for any $1\leq i\leq r$, define $\lambda_i$ to be one element verifying $p\lambda_i=\phi(e,e_i)$. 

\bigskip

Remark that the order of $\lambda_i$ thus defined divides $p$ times the order of $\phi(e,e_i)$. By assumption, this divides the order of $e_i$ and also divides the order of $\hat{e}$ which is $p$ times the order of $e$. It follows that $\lambda_i$ divides both the order of $ \hat{e}$ and $e_i$.  Whence the following equations :

\begin{align*}
\hat{\phi}(e_i,e_j)&:=\phi(e_i,e_j)\text{ for all $1\leq i,j\leq r$}\\
\hat{\phi}(\hat{e},\hat{e})&:=0\\
\hat{\phi}(\hat{e},e_i)&:=\lambda_i\\
\hat{\phi}(e_i,\hat{e})&:=-\lambda_i\\
\end{align*}

allow us to define a group morphism on $\hat{A}$ which is clearly an alternate module. Let $\psi$ be the induced module on $A$ via $\iota_A$ by $\hat{\phi}$ :

\begin{align*}
\psi(e_i, e_j)&=\hat{\phi}(e_i,e_j)=\phi(e_i,e_j)\text{for all $1\leq i,j\leq r$}\\
\psi(e,e_j)&=\hat{\phi}(p\hat{e},e_j)=p\lambda_j=\phi(e,e_j)\text{ for all $1\leq j\leq r$}\\
\end{align*}

Whence $\psi=\phi$ on $A\times A$ (since they are alternate forms). In particular we have shown that $(A,\phi)$ is a submodule of $(\hat{A},\hat{\phi})$ via $\iota_A$. \end{proof}

We have  the tools to prove the fundamental lemma.

\bigskip

\subsection{Proof of the fundamental lemma}

\bigskip

We recall first the statement of the fundamental lemma :

\begin{fl}

Let $p$ be a prime number and $(A,\phi)$  an alternate module  which is not symplectic with $A$ a $p$-group. Then there exists an alternate module $(\hat{A},\hat{\phi})$ such that $(A,\phi)$ is a submodule of $(\hat{A},\hat{\phi})$, $|A|<|\hat{A}|$ and $n_{A,\phi}=n_{\hat{A},\hat{\phi}}$.

\end{fl}

\begin{proof}

We prove the lemma doing a strong induction on the cardinal of the module. Let $p^k$ be such that the lemma is true for any alternate module $M$ with $|M|<p^k$. Let $(A,\phi)$ be an alternate module which is not symplectic and $A$ be of cardinal $p^k$. There are two different cases to consider :

\bigskip

$$\text{ Case 1 : }K_{\phi}\text{ is not included in } p\cdot A$$

By the classification of abelian $p$-groups, decompose the group $A$ as a product of cyclic subgroups :

\begin{equation}
A=\langle e_0\rangle \times\langle e_1\rangle\times \cdots\times \langle e_r\rangle
\label{decompA}
\end{equation}

Denote $d_i$ the order of $e_i$, then we may assume that $d_{i-1}$ divides $d_i$ for $1\leq i\leq r$.

\bigskip

Let $k_0\in K_{\phi}$ which is not in $p\cdot A$. By the decomposition \ref{decompA}, we may write 

$$k_0=\sum_{i=0}^r\alpha_ie_i\text{ where }  \alpha_0,\dots,\alpha_r\in \mathbb{N} $$

Since $k_0$ is not in $p\cdot A$ there exists at least one coefficient among the $\alpha_0,\dots,\alpha_r$ which is not divisible by $p$. Denote $i_0$ to be one index such that $\alpha_{i_0}$ is not divisible by $p$. 

\bigskip

Define the group $\hat{A}:=\mathbb{Z}/p\times A$. Let $\gamma$ be a generator of $\mathbb{Z}/p$. Then we define an alternate form $\hat{\phi}$ on $\hat{A}$ by :

\begin{align*}
\hat{\phi}(e_i,e_j)&:=\phi(e_i,e_j)\text{ for $0\leq i,j\leq r$}\\
\hat{\phi}(\gamma,e_i)&:=0\text{ for $0\leq i\leq r$ and $i\neq i_0$}\\
\hat{\phi}( e_i,\gamma)&:=0\text{ for $0\leq i\leq r$ and $i\neq i_0$}\\
\hat{\phi}(\gamma,\gamma)&:=0\\
\hat{\phi}(\gamma,e_{i_0})&:=\frac{1}{p} \\
\hat{\phi}( e_{i_0},\gamma)&:=-\frac{1}{p} \\
\end{align*}

Clearly, $\hat{\phi}$ is an alternate form on $\hat{A}$, furthermore, the inclusion of $A$ in $\hat{A}$ given by $a$ is mapped to $(0,a)$ clearly leads to an inclusion of modules.  Finally :

\begin{align*}
\hat{\phi}(\gamma,k_0)&=\sum_{i=0}^r\alpha_i\hat{\phi}(\gamma,e_i)\\
&=\alpha_{i_0}\hat{\phi}(\gamma,e_{i_0})\text{ since $\gamma$ is orthogonal to $e_i$ where $i\neq i_0$}\\
&=\frac{\alpha_{i_0}}{p}\neq 0\text{ in $\mathbb{Q}/\mathbb{Z}$ since $p$ does not divide $\alpha_{i_0}$}
\end{align*}

Hence we found an element in $K_{\phi}$ which is not in the kernel of $K_{\hat{\phi}}$. Applying lemma \ref{extlagconst}, we have $n_{A,\phi}=n_{\hat{A},\hat{\phi}}$, hence $(\hat{A},\hat{\phi})$ contains $(A,\phi)$, has the same cardinality of Lagrangians and is strictly greater than $A$. In this case, we have constructed the extension with constant Lagrangians (without using the induction hypothesis).

$$\text{ Case 2 : }K_{\phi}\text{ is included in } p\cdot A$$

Write :

\begin{equation}
A=\langle e \rangle \times\langle e_1\rangle\times \cdots\times \langle e_r\rangle
\label{decompA2}
\end{equation}

Denote $d$ the order of $e$, $d_i$ the order of $e_i$, then we may assume that $d$ divides $d_1$ and $d_{i}$ divides $d_{i+1}$ for $1\leq i\leq r-1$.

\bigskip

Assume that there exists $1\leq i\leq r$ such that the order of $\phi(e,e_i)$ is equal to $d_i$ (the order of $e_i$), since the order of $\phi(e,e_i)$ divides $d$, the order of $e$ (by bilinearity), it follows that $d_i$ divides $d$. Since we also have $d$ divides $d_i$ by definition, we have two elements $e$ and $e_i$ in $A$ whose order is equal to $d$ and such that $\phi(e,e_i)$ is of order $d$. As a result, the submodule 
$B$ generated by $e$ and $e_i$ is isometric to $\mathbb{Z}/d\times (\mathbb{Z}/d)^*$. In particular $B$ is a submodule of $(A,\phi)$ which is symplectic, by lemma \ref{decompsymp}, it follows that :

$$(A,\phi)=B\overset{\perp}{\oplus}B^{\perp} $$

Let $(B^{\perp},\psi)$ be the induced submodule on $B^{\perp}$ by $\phi$. By induction hypothesis, there exists a module $(C,\hat{\psi})$ such that $n_{B^{\perp},\psi}=n_{C,\hat{\psi}}$ with $|B^{\perp}|<|C|$ and $(B^{\perp},\psi)$ is a submodule of $(C,\hat{\psi})$.  Denote :

$$(\hat{A},\hat{\phi}):=B\overset{\perp}{\oplus} (C,\hat{\psi})$$

Clearly $A=B\overset{\perp}{\oplus}B^{\perp}$ is a submodule of $(\hat{A},\hat{\phi})$ and $|A|<|\hat{A}|$. Remark that $K_{\phi}=K_{\psi}$ and $K_{\hat{\phi}}=K_{\hat{\psi}}$ since $B$ is symplectic. Whence :

\begin{align*}
n_{A,\phi}&=\sqrt{|A||K_{\phi}|}\\
&=\sqrt{|B|}\sqrt{|B^{\perp}||K_{\psi}|}\\
&=\sqrt{|B|}n_{B^{\perp},\psi}\\
&=\sqrt{|B|}n_{C,\hat{\psi}}\text{ by construction of $(C,\hat{\psi})$}\\
&=\sqrt{|B|}\sqrt{|C||K_{\hat{\psi}}|}\\
&= \sqrt{|\hat{A}||K_{\hat{\phi}}|}\\
&=n_{\hat{A},\hat{\phi}}
\end{align*}

In this case we can also construct an extension $(\hat{A},\hat{\phi})$ of $(A,\phi)$ with constant Lagrangians.

\bigskip

Assume now that for $1\leq i\leq r$, the order of $\phi(e,e_i)$ strictly divides the order of $d_i$. Then we are in the condition of applications of lemma \ref{exten}. Define $\hat{A}=\langle \hat{e}\rangle\times \langle e_1\rangle\times\cdots\times\langle e_r\rangle$ where $\hat{e}$ is of order $p$ times the order of $e$ and the inclusion :

$$\iota_A:\left| \begin{array}{rcl}
A&\longrightarrow &\hat{A}\\
e&\longmapsto&p\hat{e}\\
e_i&\longmapsto& e_i\text{ for $1\leq i\leq r$}\end{array}\right.$$

then we have defined on $\hat{A}$ a bilinear form $\hat{\phi}$ such that $\iota_{A*}\hat{\phi}=\phi$. 

\bigskip

We recall that we are in the second case where $K_{\phi}\leq p\cdot A$. Denote $\pi$ the canonical projection of $A$ onto $A/K_{\phi}$.  Since $K_{\phi}\leq p\cdot A$, it follows that $\pi^{-1}(p\cdot (A/K_{\phi}))=p\cdot A$. In particular (since $e$ is clearly not an element of $p\cdot A$) it follows that $\pi(e)$ is not in $p\cdot (A/K_{\phi})$. 

\bigskip

We know that $A/K_{\phi}$ is isomorphic to $B\times B^*$ by corollary \ref{3.classificationsymp}. Denote $g_1,\dots,g_s$ a "base" of $B$ and $g_1^*,\dots,g_s^*$ the corresponding "dual base" of $B^*$ and write :

$$\pi(e)=\sum_{i=1}^r\gamma_ig_i+\gamma_i^*g_i^* $$

Since $\pi(e)$ is not in $p\cdot (A/K_{\phi})$, it follows that at least one of the $\gamma_i$ or $\gamma_i^*$ is not divisible by $p$. Up to exchanging $g_i$ and $g_i^*$, we may assume that $\gamma_{i_0}^*$ is not divisible by $p$.

\bigskip

Let $p^k$ (with $k\geq 1$) be the order of $g_{i_0}$ and $a_0\in A$ such that $\pi(a_0)=g_{i_0}$. Since $g_{i_0}$ is of order $p^k$, $p^k\cdot a_0\in K_{\phi}$. Furthermore :

\begin{align*}
\phi(e,p^{k-1}\cdot a_0)&=\overline{\phi}(\pi(e),p^{k-1}\cdot\pi(a_0))\text{ by definition of $(A/K_{\phi},\overline{\phi})$}\\
&=p^{k-1}\overline{\phi}(\pi(e),g_{i_0})\\
&=p^{k-1}\sum_{i=1}^r\gamma_i\overline{\phi}(g_i,g_{i_0})+\gamma_i^*\overline{\phi}(g_i^*,g_{i_0})\\
&=p^{k-1}\gamma_{i_0}^*\overline{\phi}(g_{i_0}^*,g_{i_0})\\
\end{align*}

Remark that, by definition $\overline{\phi}(g_{i_0}^*,g_{i_0})$ is of order $p^k$, since $\gamma_{i_0}^*$ is not divisible by $p$ :

$$\phi(e,p^{k-1}\cdot a_0)=p^{k-1}\gamma_{i_0}^*\overline{\phi}(g_{i_0}^*,g_{i_0})\text{ is of order $p$, whence not trivial.}$$

Remark now that, in $(\hat{A},\hat{\phi})$ we have :

\begin{align*}
\hat{\phi}(\hat{e},p^k\cdot a_0)&=\hat{\phi}(p\cdot\hat{e},p^{k-1}\cdot a_0)\text{ by $\mathbb{Z}$-bilinearity}\\
&=\phi(e,p^{k-1}\cdot a_0)\text{ since $p\cdot \hat{e}=\iota_A(e)$ and $\iota_{A*}\hat{\phi}=\phi$}\\
&\neq 0\text{ in $\mathbb{Q}/\mathbb{Z}$}
\end{align*}

In particular, $p^k\cdot a_0$ is an element of $K_{\phi}$ which is not in $K_{\hat{\phi}}$. Applying lemma \ref{extlagconst}, we have that $n_{A,\phi}=n_{\hat{A},\hat{\phi}}$.  \end{proof}

Now that the fundamental lemma is proven, we shall see that the theorem follows by an easy induction on the cardinal of the kernel of the alternate module.

\bigskip

\subsection{Proof of the theorem}

\bigskip

Recalling that the result we want to prove is the following theorem :

\begin{thm} 
Any alternate module is subsymplectic.

\end{thm}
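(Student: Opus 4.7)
The plan is to combine Corollary \ref{ptors} with an iterated application of the fundamental lemma. First I would use Corollary \ref{ptors} to reduce to the case in which the underlying group $A$ is a $p$-group: since being subsymplectic is stable under orthogonal sums, it suffices to verify the property on each $p$-Sylow equipped with the induced form. So fix a prime $p$ and an alternate module $(A,\phi)$ with $A$ a $p$-group.

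Next I would run an induction on $|K_{\phi}|$. The base case is when $K_{\phi}$ is trivial, so $(A,\phi)$ is already symplectic; Corollary \ref{3.classificationsymp} then provides $B\leq A$ with $(A,\phi)$ isometric to $B\times B^{*}$, and since $|B|=\sqrt{|A|}=n_{A,\phi}$, the module is trivially subsymplectic. For the inductive step, assume $K_{\phi}$ is non-trivial, so that $(A,\phi)$ is not symplectic. Apply the fundamental lemma to obtain an alternate extension $(\hat{A},\hat{\phi})\supseteq (A,\phi)$ with $|\hat{A}|>|A|$ and $n_{\hat{A},\hat{\phi}}=n_{A,\phi}$. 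The defining relation
\[
n_{A,\phi}^{2}=|A|\cdot|K_{\phi}|=|\hat{A}|\cdot|K_{\hat{\phi}}|,
\]
combined with $|A|<|\hat{A}|$, forces $|K_{\hat{\phi}}|<|K_{\phi}|$. Moreover, $\hat{A}$ remains a $p$-group: the constructions in the proof of the fundamental lemma only introduce generators of $p$-power order. Hence the induction hypothesis applies to $(\hat{A},\hat{\phi})$.

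By induction $(\hat{A},\hat{\phi})$ is subsymplectic, so there exists an abelian group $B$ of order $n_{\hat{A},\hat{\phi}}=n_{A,\phi}$ together with an inclusion $(\hat{A},\hat{\phi})\hookrightarrow B\times B^{*}$. Composing with the inclusion $(A,\phi)\hookrightarrow (\hat{A},\hat{\phi})$ exhibits $(A,\phi)$ as a submodule of $B\times B^{*}$ with $|B|=n_{A,\phi}$, which is exactly the definition of subsymplectic.

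The substantive work, namely producing an extension of strictly larger cardinal with constant Lagrangian size, has already been absorbed into the fundamental lemma, so the only point to check here is termination of the iteration; this is automatic because $|K_{\phi}|$ is a positive integer that strictly decreases at each step and so must reach $1$ after finitely many applications. I do not expect any further obstacle: once one trusts the fundamental lemma and Corollary \ref{3.classificationsymp}, the remainder is pure bookkeeping around the identity $n_{A,\phi}^{2}=|A|\cdot|K_{\phi}|$.
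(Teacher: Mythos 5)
Your proposal is correct and follows essentially the same route as the paper: reduction to $p$-groups via Corollary \ref{ptors}, induction on $|K_{\phi}|$ with Corollary \ref{3.classificationsymp} for the symplectic base case, and the fundamental lemma plus the identity $n_{A,\phi}^{2}=|A||K_{\phi}|$ to strictly decrease the kernel in the inductive step. Your added remark that $\hat{A}$ stays a $p$-group is a sensible (and correct) point of care that the paper leaves implicit.
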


\begin{proof} 

By corollary \ref{ptors}, it suffices to prove it for alternate modules $(A,\phi)$ where $A$ is a $p$-group and $p$ is a prime number.

\bigskip

Let us show the following by induction on $k$ : Let $(A,\phi)$ be an alternate module which is a $p$-group with $|K_{\phi}|=p^k$ then $(A,\phi)$ is subsymplectic. 

\bigskip

If $k=0$ then $(A,\phi)$ is symplectic. Therefore, by corollary \ref{3.classificationsymp}, it is isometric  to $B\times B^*$ where $|B|=\sqrt{|A|}=n_{A,\phi}$. It follows that $(A,\phi)$ is subsymplectic by definition.

\bigskip

If $k>0$ then $(A,\phi)$ is an alternate module which is not symplectic (its kernel is not trivial) whence, by the fundamental lemma, there exists an alternate module $(\hat{A},\hat{\phi})$ such that $(A,\phi)$ is a submodule of $(\hat{A},\hat{\phi})$, $|A|<|\hat{A}|$ and $n_{A,\phi}=n_{\hat{A},\hat{\phi}}$.

\bigskip

We have $n_{A,\phi}^2=n_{\hat{A},\hat{\phi}}^2$ so $|A||K_{\phi}|=|\hat{A}||K_{\hat{\phi}}|$, since $|A|<|\hat{A}|$ we have that $|K_{\hat{\phi}}|<|K_{\phi}|=p^k$. It follows that we can apply the induction hypothesis to $(\hat{A},\hat{\phi})$ and there exists an abelian group $B$ of order $n:=n_{\hat{A},\hat{\phi}}$ such that :

$$(\hat{A},\hat{\phi})\leq B\times B^* $$

Since $(A,\phi)$ is a submodule of $(\hat{A},\hat{\phi})$, we finally have included the module $(A,\phi)$ in $B\times B^*$ where $|B|=n=n_{\hat{A},\hat{\phi}}=n_{A,\phi}$. It follows that $(A,\phi)$ is subsymplectic by definition. \end{proof}

\bigskip

\section{Remarks}

\bigskip

The proof is constructive. Indeed, it explicitely gives the construction (by induction) of a symplectic module $B\times B^*$ containing $(A,\phi)$ with $|B|=n_{A,\phi}$. Basically, in order to define an algorithm computing $B$, one needs an algorithm that, given $(A,\phi)$ computes its kernel and a "diagonalization" of its associated symplectic module $(A/K_{\phi},\overline{\phi})$.

\bigskip

Remark that, given $(A,\phi)$, an alternate module, there might exist $B_1\neq B_2$, both of cardinal $n_{A,\phi}$ such that $(A,\phi)$ is included in $B_i\times B_i^*$ for $i=1,2$ :

\bigskip

\begin{cex}

Let $A$ be the abelian group $\mathbb{Z}/2\times\mathbb{Z}/4$ endowed with the trivial bilinear form. It follows that $n_{A,\phi}=|A|$. Let $B_1:=A$ and $B_2:=\mathbb{Z}/8$. Then we have two inclusions :

\begin{displaymath}
\iota_{A,1}:
\left|
  \begin{array}{rcl}
    A& \longrightarrow &B_1\times B_1^{*}\\
 (u,v)& \longmapsto &((u,v),(0,0))\\
  \end{array}
\right.\text{ and }\iota_{A,2}:
\left|
  \begin{array}{rcl}
    A& \longrightarrow &B_2\times B_2^{*}\\
 (u,v)& \longmapsto &(4u,2v)\\
  \end{array}
\right.
\end{displaymath}

Those two inclusions are inclusions of submodules (when $B_i\times B_i^*$ is endowed with its natural structure of symplectic form, defined in example \ref{3.sympmod}).  Furthermore, it is clear that they are extensions with constant Lagrangians. However $B_1\neq B_2$.

\end{cex}

As we have stated in the introduction of this paper, the theorem will be used in another paper in order to give a classification of centralizer of irreducible subgroups of $PSL(n,\mathbb{C})$. An interesting thing to have would be a classification of alternate modules. 

\bigskip

Let $(A,\phi)$ be an alternate module, we say that $(A,\phi)$ is \textbf{indecomposable} if, whenever $(A,\phi)$ is the orthogonal sum of $(A_1,\phi_1)$ and $(A_2,\phi_2)$ then $(A_1,\phi_1)$ or $(A_2,\phi_2)$ is $(\{0\},0)$. 

\bigskip

Since any alternate module is clearly the sum of indecomposable ones by induction, if we want to classify alternate modules, we only need to characterize indecomposable ones. This leads to the following question :

\begin{q}

Let $(A,\phi)$ be an indecomposable alternate module. Does it follow that $rk(A)\leq 3$? Does it follow that $rk(A/K_{\phi})\leq 2$?
\end{q}

We computed many examples and this conjecture seems to be verified. However we still do not have a proof for it.

\newpage

\section*{Acknowledgements}

\bigskip

All computations performed on examples leading to this result have been performed using the GAP system  \cite{GAP4}.

\bigskip

I would like to thank my thesis advisor Olivier Guichard, whose suggestions have been very helpful. I would also like to thank Gaël Collinet who encouraged me to fully investigate this topic and Jean-Pierre Tignol for pointing out some helpful references. Finally, I would like to thank my fellow Ph.D student, Mohamad Maassarani for carefully listening to my proofs and his helpful comments.

\end{document}